\newtheorem{thm}{Theorem}
\newtheorem{lem}{Lemma}
\newcommand{\td}{\tilde D}
\newcommand{\bx}{{\bf x}}
\newcommand{\by}{{\bf y}}
\newcommand{\rn}{\mathbb{R}^n}
\newcommand{\rr}{\mathbb{R}}
\newcommand{\sn}{S_{2}^{n-1}}
\newcommand{\lp} {\lambda'}
\begin{document}

\title{On Euclidean Norm Approximations}
\author{M.\ Emre Celebi}
\address{Department of Computer Science\\Louisiana State University, Shreveport, LA, USA\\
        \href{mailto:ecelebi@lsus.edu}{ecelebi@lsus.edu}}
\author{Fatih Celiker}
\address{Department of Mathematics\\Wayne State University, Detroit, MI, USA\\
         \href{mailto:celiker@math.wayne.edu}{celiker@math.wayne.edu}}
\author{Hassan A.\ Kingravi}
\address{Department of Computer Science\\Georgia Institute of Technology, Atlanta, GA, USA\\
         \href{mailto:kingravi@gatech.edu}{kingravi@gatech.edu}}

\begin{abstract}
Euclidean norm calculations arise frequently in scientific and engineering applications.
Several approximations for this norm with differing complexity and accuracy have been proposed in the literature.
Earlier approaches \cite{Chaudhuri92,Rhodes95,Barni95} were based on minimizing the maximum error. Recently,
Seol and Cheun \cite{Seol08} proposed an approximation based on minimizing the average error. In this paper,
we first examine these approximations in detail, show that they fit into a single
mathematical formulation, and compare their average and maximum errors.
We then show that the maximum errors given
by Seol and Cheun are significantly optimistic.

\end{abstract}

\maketitle
%\linenumbers

\section{Introduction}
\label{sec_intro}
The Minkowski ($L_p$) metric is inarguably one of the most commonly used
quantitative distance (dissimilarity) measures in
scientific and engineering applications.
The Minkowski distance between two vectors
$\bx = (x_1 ,x_2 , \ldots ,x_n )$ and {$\by = (y_1 ,y_2 , \ldots ,y_n )$}
in the $n$-dimensional Euclidean space, $\mathbb{R}^n$, is given by
\begin{equation}
\label{equ_lp}
 L_p (\bx,\by) =
 \left( {\sum\nolimits_{i = 1}^n {\left| {x_i  - y_i } \right| ^p } } \right)^{1/p}.
\end{equation}
Three special cases of the $L_p$ metric are of particular interest, namely,
$L_1$ (city-block metric), $L_2$ (Euclidean metric), and $L_\infty$ (chessboard metric).
Given the general form (\ref{equ_lp}), $L_1$ and $L_2$ can be defined
in a straightforward fashion, while $L_\infty$ is defined as
\begin{equation*}
L_{\infty}({\bf x},{\bf y}) = \max_{1\le i\le n}|x_i-y_i|.
\end{equation*}
In many applications, the data space is Euclidean and therefore the $L_2$ metric is the natural choice.
In addition, this metric has the advantage of being isotropic (rotation invariant). For example, when
the input vectors stem from an isotropic vector field, e.g.\ a velocity field, the most appropriate choice is
to use the $L_2$ metric so that all vectors are processed in the same way, regardless of their orientation
\cite{Barni95}.

The main drawback of $L_2$ is its high computational requirements due to
the multiplications and the square root operation.
As a result, $L_1$ and $L_\infty$ are often used as alternatives.
Although these metrics are computationally more efficient,
they deviate from $L_2$ significantly.
The Minkowski metric is translation invariant, i.e.\ $L_p(\bx,\by) = L_p(\bx + \bf{z},\by + \bf{z})$ for all $\bx, \by, \bf{z} \in \mathbb{R}^n$, hence it suffices to consider $D_p(\bx) = L_p (\bx,{\bf 0})$,
i.e. the distance from the point $\bx$ to the origin.
Therefore, in the rest of the paper, we will consider approximations to
$D_p(\bx)$ rather than $L_p (\bx,\by)$.

In this paper, we examine several approximations to the Euclidean norm.
The rest of the paper is organized as follows. In Section \ref{sec_approx}
we describe the Euclidean norm approximations that have appeared in the literature,
and compare their average and maximum errors using numerical simulations.
We then show that all of these methods fit into a single mathematical formulation.
In Section \ref{sec_sampling} we examine the simulation results from a theoretical
perspective. Finally, in Section \ref{sec_conc} we provide our conclusions.

\section{Euclidean Norm Approximations}
\label{sec_approx}
For reasons explained in Sec. \ref{sec_intro}, we concentrate on approximations
to the Euclidean norm $D_2$ on $\mathbb{R}^n$.
Let $\td$, defined on $\rn$, be an approximation to $D_2$.
We assume that $\td$ is a continuous homogeneous function.
We note that all variants of $\td$ we consider in this paper
satisfy these assumptions.
As a measure of the quality of the approximation of $\td$ to $D_2$
we define the maximum relative error (MRE) as
\begin{equation}
\label{equ_max_err_temp1}
\varepsilon_{\text{max}}^{\td} = \sup \limits_{\bx \in \rn\setminus\{{\bf 0}\}}
{\frac{|{\td(\bx)-D_2(\bx)}|}{{D_2(\bx)}}}.
\end{equation}
Using the homogeneity of $D_2$ and $\td$,
\eqref{equ_max_err_temp1} can be written as
\begin{equation}
\label{equ_max_err_temp2}
\varepsilon_{\text{max}}^{\td} = \sup_{\bx\in\sn} |\td(\bx)-1|,
\end{equation}
where
\[
\sn = \{ \bx \in \rn : D_2(\bx)=1 \}
\]
is the unit hypersphere of $\rn$ with respect to the Euclidean norm.
Furthermore, by the continuity of $\td$, we can replace the supremum with maximum
in \eqref{equ_max_err_temp2} and write
\begin{equation}
\label{equ_max_err}
\varepsilon_{\text{max}}^{\td} = \max_{\bx\in\sn} |\td(\bx)-1|.
\end{equation}
We will use \eqref{equ_max_err} as the definition of MRE throughout.

In the trivial case where $\td=D_2$ we have $\varepsilon_{\text{max}}^{\td}=0$.
Hence, for nontrivial cases we wish to have a small $\varepsilon_{\text{max}}^{\td}$ value.
In other words, the smaller the value of $\varepsilon_{\text{max}}^{\td}$,
the better (more accurate) the corresponding approximation ${\td}$.
It can be shown that $D_1$ (city-block norm)
overestimates $D_2$ and the corresponding MRE is given by
$\varepsilon_{\text{max}}^{D_1} = \sqrt{n}-1$ \cite{Chaudhuri92}.
In contrast, $D_\infty$ (chessboard norm) underestimates $D_2$ with MRE given by
$\varepsilon_{\text{max}}^{D_{\infty}} = 1-1/\sqrt{n}$ \cite{Chaudhuri92}.
More explicitly,
\begin{equation}\label{equ_bounds}
\begin{aligned}
D_2(\bx)             &\leq D_1(\bx)\;    \leq \sqrt{n}D_2(\bx)\\
(1/\sqrt{n})D_2(\bx) &\leq D_\infty(\bx) \leq D_2(\bx)
\end{aligned}
\end{equation}
for all $\bx \in \rn$.
Therefore, it is natural to expect a suitable linear combination of $D_1$ and $D_\infty$
to give an approximation to $D_2$ better than both $D_1$ and $D_\infty$ \cite{Rhodes95}.

\subsection{Chaudhuri \emph{et al.}'s approximation}
\label{sec_chaudhuri}
Chaudhuri \emph{et al.} \cite{Chaudhuri92} proposed the approximation
\footnote{Unfortunately, the motivation behind this particular choice of $\lambda$ is not given in the paper.}
\begin{equation*}
D_{\lambda}(\bx) =
|x_{i_{\text{max}}}| +
\lambda\hspace{-.1in}
       \sum_{\begin{subarray}{l}
             \quad i =1 \\
             \quad i \ne i_{\max}
             \end{subarray}}^n \hspace{-.1in}|x_i|,
\qquad\mbox{with}\qquad
\lambda  = \frac{1}{{n - \left\lfloor{\frac{n-2}{2}}\right\rfloor}}.
\end{equation*}
Here $i_{\text{max}}$ is the index of the absolute largest component of $\bx$,
i.e.\ $i_{\text{max}} = \mathop {\arg \max }\limits_{1 \leq i \leq n} \left( |{x_i}| \right)$,
and $\lfloor x \rfloor$ is the floor function which returns
the largest integer less than or equal to $x$.
Since $D_\infty(\bx) = |x_{i_{\text{max}}}|$, by adding and subtracting
the term $\lambda|x_{i_{\text{max}}}|$, $D_{\lambda}$ can be written
as a linear combination of $D_\infty$ and $D_1$ as
\begin{equation}
\label{equ_lin_comb}
D_\lambda(\bx) = (1 - \lambda)D_\infty(\bx) + \lambda D_1(\bx).
\end{equation}
It is easy to see that $D_\infty(\bx) \leq D_\lambda(\bx) \leq D_1(\bx)$
 for all $\bx \in \mathbb{R}^n$ since $0 < \lambda \leq 0.5$.
It can also be shown that \cite{Chaudhuri92} for sufficiently large $n$, $D_\lambda$ is closer to $D_2$ than both $D_1$ and $D_\infty$, i.e.\ $\left| D_\lambda(\bx) - D_2(\bx) \right| \leq \left| D_1(\bx) - D_2(\bx) \right|$ and
$\left| D_\lambda(\bx) - D_2(\bx) \right| \leq \left| D_2(\bx) - D_\infty(\bx) \right|$ for all $\bx \in \mathbb{R}^n$.

For sufficiently large $n$, $D_{\lambda}$ underestimates $D_2$
and the corresponding MRE is given by
\begin{equation*}
1 - \frac{{1-\lambda(n-1)}}{{\sqrt n}}
\quad\le\quad
\varepsilon_{\text{max}}^{D_{\lambda}}
\quad\le\quad
1 - \lambda
\quad
\mbox{for} \quad n \geq 3.
\end{equation*}
Otherwise, $D_{\lambda}$ overestimates $D_2$ and we have
\begin{equation*}
\varepsilon_{\text{max}}^{D_{\lambda}} =
\left\{
 \begin{array}{l}
 \sqrt {1 + \frac{{4(n - 1)}}{{(n + 2)^2 }}}  - 1\quad {\rm for}\;n = 2,4,6, \ldots  \\
 \\
 \sqrt {1 + \frac{{4(n - 1)}}{{(n + 3)^2 }}}  - 1\quad {\rm for}\;n = 3,5,7, \ldots  \\
 \end{array}
\right.
\end{equation*}
Proofs of these identities can be found in \cite{Chaudhuri92}.

\subsection{Rhodes' approximations}
\label{sec_rhodes}

Rhodes \cite{Rhodes95} reformulated (\ref{equ_lin_comb}) as a maximum of linear functions
\begin{equation*}
D_\lambda(\bx) = \mathop {\max }\limits_{1 \leq j \leq n} \left\{ {(1 - \lambda )|x_j| +
\lambda \sum\limits_{i = 1}^n {|x_i|} } \right\}
\end{equation*}
where $0 < \lambda < 1$ is taken as a free parameter. He determined the optimal value for $\lambda$
by minimizing $\varepsilon_{\max}^{D_{\lambda}} = \max_{\bx \in S_2^{n-1}}|D_{\lambda}(\bx)-1|$ analytically.
In particular, he showed that optimal $\lp$ values for
Chaudhuri \emph{et al.}'s norm can be determined by
solving the equation $1 - 2\sqrt{\lp - (\lp) ^2} = \sqrt {1 + (\lp)^2 (n - 1)} - 1$
in the interval $(0,1/2)$.
This equation is a quartic (fourth order) in $\lambda$
and can be solved using Ferrari's method \cite{King08}.
It can be shown that this particular quartic equation has two real and two complex roots and
the optimal $\lambda'$ value is given by the smaller of the real roots.
The corresponding MRE is given by \cite{Rhodes95}
\begin{equation}
\label{equ_chaudhuri_mre}
\varepsilon_{\max}^{D_{\lambda}} = 1-2\sqrt{\lp-(\lp)^2}.
\end{equation}
In the remainder of this paper, $D_\lambda$ refers to this improved
variant of Chaudhuri \emph{et al.}'s norm.

Rhodes also investigated the two-parameter family of approximations given by
\begin{equation}
\label{equ_rhodes_two}
D_{\mu ,\lambda} (\bx) = (\mu - \lambda )D_\infty(\bx) + \lambda D_1(\bx)
\end{equation}
where $0 < \lambda < \mu$.
He proved that the optimal solution and its MRE in this case are given by
\begin{equation}
\label{equ_rhodes_mre}
\begin{aligned}
\lambda^* &= \frac{2}{2n^{1/4}+\sqrt{2n+2\sqrt n}}, \qquad
\mu^*     = \left( {\sqrt{n}+1}\right)\lambda^*, \\
&\qquad\qquad\qquad
\varepsilon_{\max}^{D_{\mu,\lambda}} = 1-2\lambda^*n^{1/4}.
\end{aligned}
\end{equation}

Finally, Rhodes investigated the $D_{\mu ,\lambda}$
approximations with $0 \leq \mu <\lambda$.
He proved that the optimal solution and its MRE are given by
\begin{equation*}
\lambda^* = \frac{2}{1+\sqrt{n-1}}, \qquad
\mu^*=0, \qquad
\varepsilon_{\max}^{D_{\mu,\lambda}} = 1-\lambda^*.
\end{equation*}
This approximation will not be considered any further since its accuracy
is inferior to even the single-parameter approximation $D_\lambda$.

It should be noted that Rhodes optimized $D_\lambda$ and $D_{\mu ,\lambda}$ over $\mathbb{Z}^n$.
Therefore, these norms are in fact suboptimal on $\mathbb{R}^n$ (see \S \ref{sec_comparisons}).

\subsection{Barni \emph{et al.}'s approximation}
\label{sec_barni}
Barni \emph{et al.} \cite{Barni95,Barni00} formulated a generic approximation for $D_2$ as
\begin{equation*}
D_B(\bx) = \delta \sum\limits_{i = 1}^n {\alpha _i x_{(i)}}
\end{equation*}
where $x_{(i)}$ is the $i$-th absolute largest component of
$\bx$, i.e. $(x_{(1)}, x_{(2)}, \cdots, x_{(n)})$ is a permutation of
$(|x_1|,|x_2|,\cdots,|x_n|)$ such that
$x_{(1)} \geq x_{(2)} \geq \ldots \geq x_{(n)}$.
Here $\boldsymbol{\alpha} = (\alpha_1, \alpha_2, \cdots, \alpha_n)$
and $\delta > 0$ are approximation parameters.
Note that a non-increasing ordering and strict positivity of the component weights,
i.e. $\alpha_1 \ge \alpha_2 \ge \cdots \ge \alpha_n > 0$
is a necessary and sufficient condition for $D_B$ to define a norm \cite{Barni00}.

The minimization of (\ref{equ_max_err}) is equivalent to determining the weight vector $\boldsymbol{\alpha}$
and the scale factor $\delta$ that solve the following minimax problem \cite{Barni00,Demyanov90}
\begin{equation}
\min\limits_{\boldsymbol{\alpha},\delta} \max\limits_{\bx \in V} \left| {D_B(\bx) - 1} \right|
\end{equation}
where $V = \{ \bx \in \rn \,:\, x_1 \ge x_2 \ge \cdots \ge x_n \ge 0,\; D_2(\bx)=1 \}$.

The optimal solution and its MRE are given by
\begin{equation}
\label{equ_barni_mre}
\alpha_i^* = \sqrt{i}-\sqrt{i-1}, \qquad
\delta^* = \frac{2}{{1+\sqrt{\sum_{i=1}^n {{\alpha_i^*}^2}}}}, \qquad
\varepsilon_{\text{max}}^{D_B}= 1 - \delta^*.
\end{equation}
It should be noted that a similar but less rigorous approach had been published earlier by Ohashi \cite{Ohashi94}.

\subsection{Seol and Cheun's approximation}
\label{sec_seol}

Seol and Cheun \cite{Seol08} recently proposed an approximation of the form
\begin{equation}
\label{equ_seol_cheun}
D_{a,b} (\bx) = a D_\infty(\bx) + b D_1(\bx)
\end{equation}
where $a$ and $b$ are strictly positive parameters
to be determined by solving the following $2\times 2$ linear system
\begin{alignat*}{3}
 &a{\rm E}({D_\infty^2}  ) &&+ b{\rm E}({D_\infty D_1}) &&= {\rm E}({D_2 D_\infty}), \\
 &a{\rm E}({D_\infty D_1}) &&+ b{\rm E}({D_1^2}       ) &&= {\rm E}({D_2 D_1}),
\end{alignat*}
where $\rm E (\cdot)$ is the expectation operator.

Note that the formulation of $D_{a,b}$ is similar
to that of $D_{\mu ,\lambda}$ \eqref{equ_rhodes_two}
in that they both approximate $D_2$ by a linear combination of
$D_{\infty}$ and $D_1$. These approximations differ in their
methodologies for finding the optimal parameters. Rhodes follows
an analytical approach and derives theoretical values for the parameters
and the maximum error. However, he achieves this by sacrificing
maximization over $\rn$, and maximizes only over $\mathbb{Z}^n$.
Seol and Cheun follow an empirical approach where they approximate
optimal parameters over $\rn$, which causes them to sacrifice the ability
to obtain analytical values for the parameters and the maximum error.
They estimate the optimal values of $a$ and $b$ using
$100,000$ $n$-dimensional vectors whose components are independent and
identically distributed, standard Gaussian random variables.

\subsection{Comparison of the Euclidean norm approximations}
\label{sec_comparisons}
It is easy to see that all of the presented approximations fit into the general form
\begin{equation*}
\td(\bx) = \sum\limits_{i=1}^n {w_i x_{(i)}}
\end{equation*}
which is a weighted $D_1$ norm.

The component weights for each approximation are given in Table \ref{tab_weights}.
It can be seen that $D_B$ has the most elaborate design in which each component
is assigned a weight proportional to its ranking. However, this weighting scheme
also presents a drawback in that a full ordering of the component absolute values
is required (see Table \ref{tab_cost}).

\begin{table}[h]
\centering
\caption{ \label{tab_weights} Weights for the approximate norms }
\vspace{.2cm}
\begin{tabular}{c|c|c}
\hline
Norm  & $w_1$ & $w_{i \neq 1}$\\
\hline
\hline
$D_\lambda$ & 1 & $\lambda'$\\
\hline
$D_{\mu ,\lambda}$ & $\mu^*$ & $\lambda^*$\\
\hline
$D_B$ & $\delta^*$ & $\delta^* \alpha_i^*$\\
\hline
$D_{a,b}$ & $a + b$ & $b$\\
\hline
\end{tabular}
\end{table}

Due to their formulations, the MRE's for
$D_\lambda$, $D_{\mu ,\lambda}$, and $D_B$
can be calculated analytically using \eqref{equ_chaudhuri_mre}, \eqref{equ_rhodes_mre},
and \eqref{equ_barni_mre}, respectively.
In Figure \ref{fig_max_err} we plot the theoretical errors for these norms for $n \leq 100$.
It can be seen that $D_B$ is not only more accurate than
$D_\lambda$ and $D_{\mu ,\lambda}$, but it also scales significantly better.
Although $D_{\mu ,\lambda}$ is more accurate than $D_\lambda$ when $n$ is small,
the difference between the two approximations becomes less significant as $n$ is increased.
\begin{figure}[!ht]
\centering
\includegraphics[width=0.72\columnwidth]{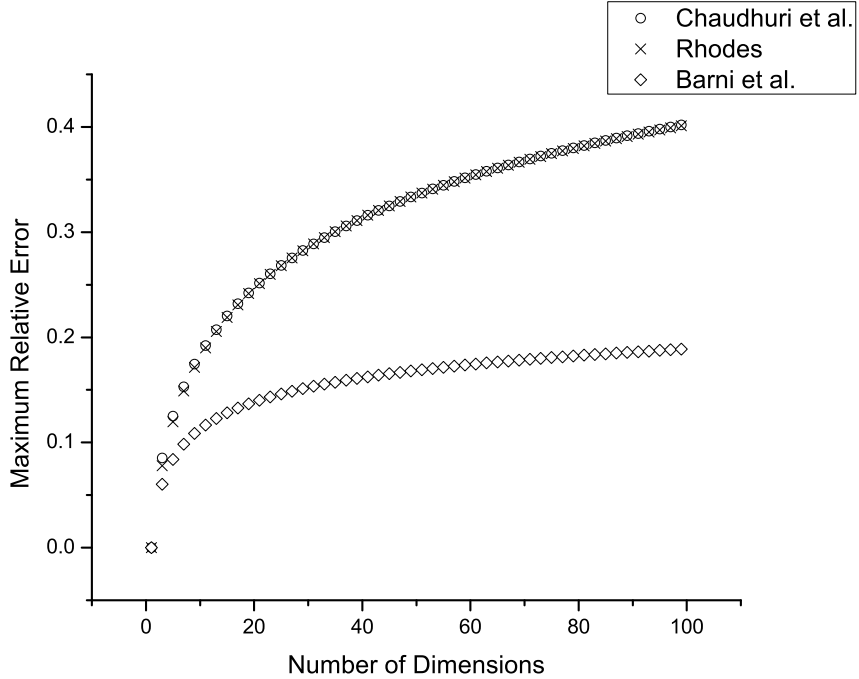}
\caption{\label{fig_max_err} Maximum relative errors for $D_\lambda$, $D_{\mu ,\lambda}$, and $D_B$}
\end{figure}

The operation counts for each norm are given in Table \ref{tab_cost}
(\textbf{ABS}: absolute value,
\textbf{COMP}: comparison,
\textbf{ADD}: addition,
\textbf{MULT}: multiplication,
\textbf{SQRT}: square root).
The following conclusions can be drawn:
\begin{itemize}
\renewcommand{\labelitemi}{$\triangleright$}
\item
$D_B$ has the highest computational cost among the approximate norms
due to its costly weighting scheme, which requires sorting of $n$
numbers and $n$ multiplications.
For small values of $n$, sorting can be performed most efficiently
by a sorting network \cite{Cormen09}.
For large values of $n$, sorting requires $\mathcal{O}(n \log{n})$ comparisons,
which is likely to exceed the cost of the square root operation \cite{Barni00}.
Therefore, in high dimensional spaces, e.g. $n > 9$ \cite{Seol08}, $D_B$ provides
no computational advantage over $D_2$.
\item
$D_\lambda$ has the lowest computational cost among the approximate norms.
$D_{\mu ,\lambda}$ and $D_{a,b}$ have the same computational cost, which is
slightly higher than that of $D_\lambda$.
\item
A significant advantage of $D_\lambda$, $D_{\mu ,\lambda}$, and $D_{a,b}$
is that they require a fixed number of multiplications (1 or 2) regardless of
the value of $n$.

\item
$D_\lambda$, $D_{\mu ,\lambda}$, and $D_{a,b}$ can be used to approximate
$D_2^2$ (squared Euclidean norm) using an extra multiplication.
On the other hand, the computational cost of $D_B$ is higher than that of
$D_2^2$ due to the extra absolute value and sorting operations
involved.
\end{itemize}

\begin{table}
\centering
\caption{ \label{tab_cost} Operation counts for the norms}
\vspace{.2cm}
\begin{tabular}{c|c|c|c|c|c}
\hline
\textbf{Norm} & \textbf{ABS} & \textbf{COMP} & \textbf{ADD} & \textbf{MULT} & \textbf{SQRT}\\
\hline
\hline
$D_\infty$ & $n$ & $n - 1$ & 0 & 0 & 0\\
\hline
$D_1$ & $n$ & 0 & $n - 1$ & 0 & 0\\
\hline
$D_2$ & 0 & 0 & $n - 1$ & $n$ & 1\\
\hline
$D_\lambda$ & $n$ & $n - 1$ & $n - 1$ & 1 & 0\\
\hline
$D_{\mu ,\lambda}$ & $n$ & $n - 1$ & $n$ & 2 & 0\\
\hline
$D_B$ & $n$ & $\mathcal{O}(n \log{n})$ & $n - 1$ & $n$ & 0\\
\hline
$D_{a,b}$ & $n$ & $n - 1$ & $n$ & 2 & 0\\
\hline
\end{tabular}
\end{table}

In Table \ref{tab_avg_max_err} we display the average and maximum errors
for $D_\lambda$, $D_{\mu ,\lambda}$, and $D_B$ for $n \leq 10$.
Average relative error (ARE) is defined as
\begin{equation}
\label{equ_avg_err}
\varepsilon_{\text{avg}}^{\td} =
\frac{1}{|S|} \sum\limits_{\bx \in S}
{|\td(\bx) - 1|}
\end{equation}
where $S$ is a finite subset of the unit hypersphere $S_2^{n-1}$,
and $|S|$ denotes the number of elements in $S$.
An efficient way to pick a random
point on $S_2^{n-1}$ is to generate $n$ independent Gaussian random variables
$x_1, x_2, \ldots, x_n$ with zero mean and unit variance.
The distribution of the unit vectors
\[
\Big\{\by=(y_1,y_2,\ldots,y_n)\; : \; y_i={{x_i}/{\Big(\sum\nolimits_{i=1}^{n}x_i^2\Big)^{1/2},\quad i=1,2,\ldots,n}}\Big\}
\]
will then be uniform over the surface of the hypersphere \cite{Muller59}. For each approximate norm,
the ARE and MRE values were calculated over an increasing number of points,
$ 2^{20}, 2^{21}, \ldots, 2^{32} - 1 $ (that are uniformly distributed on the
hypersphere) until the error values converge, i.e.\ the error values do not differ
by more than $\epsilon = 10^{-5}$ in two consecutive iterations. Note that for each norm,
two types of maximum error were considered: empirical maximum error ($\mbox{MRE}_e$), which
is calculated numerically over $S$ and the theoretical maximum error ($\mbox{MRE}_t$),
which is calculated analytically using \eqref{equ_chaudhuri_mre}, \eqref{equ_rhodes_mre},
or \eqref{equ_barni_mre}.
It can be seen that for $D_B$, the empirical and maximum errors agree in all cases,
which demonstrates the validity of the presented iterative error calculation scheme.
This is not the case for $D_\lambda$ and $D_{\mu ,\lambda}$ since these norms are optimized
over $\mathbb{Z}^n$ instead of $\mathbb{R}^n$.
Therefore, a perfect agreement between the empirical and theoretical results should not be expected.
Nevertheless, the empirical error is always less than the
maximum error, which is expected because we are maximizing over a smaller set.

Table \ref{tab_seol_err} shows the average and maximum errors for $D_{a,b}$.
The error values under the column ``Seol \& Cheun" are taken from \cite{Seol08}
(where the simulations were performed on a set of $100,000$ $n$-dimensional vectors
whose components are independent and identically distributed, zero mean, and unit
variance Gaussian random variables), whereas those under the column ``This study"
were obtained using the aforementioned iterative scheme. It can be seen that
the maximum errors obtained by Seol \& Cheun are lower than those that we obtained
and the discrepancy between the outcomes of the two error calculation schemes
increases as $n$ is increased. The optimistic maximum error values given by
Seol and Cheun are due to the fact that $100,000$ vectors are not enough to cover
the surface of the hypersphere in higher dimensions. This is investigated further in
the following section. On the other hand, the average error values agree perfectly in both
calculation schemes.

\begin{table}
\centering
\caption{ \label{tab_avg_max_err} Average and maximum errors for $D_\lambda$,
$D_{\mu ,\lambda}$, and $D_B$ }
\vspace{.2cm}
\begin{tabular}{c|c|c|c|c|c|c|c|c|c}
\hline
 & \multicolumn{3}{|c|}{$D_\lambda$} & \multicolumn{3}{|c|}{$D_{\mu ,\lambda}$}
 & \multicolumn{3}{|c}{$D_B$}\\
\hline
 $n$ & ARE & $\mbox{MRE}_e$ & $\mbox{MRE}_t$ & ARE & $\mbox{MRE}_e$ & $\mbox{MRE}_t$
 & ARE & $\mbox{MRE}_e$ & $\mbox{MRE}_t$\\
\hline
2 & 0.0348 & 0.0551 & 0.0551 & 0.0276 & 0.0470 & 0.0470 & 0.0241 & 0.0396 & 0.0396\\
\hline
3 & 0.0431 & 0.0852 & 0.0852 & 0.0367 & 0.0778 & 0.0778 & 0.0300 & 0.0602 & 0.0602\\
\hline
4 & 0.0455 & 0.1074 & 0.1074 & 0.0420 & 0.1010 & 0.1010 & 0.0345 & 0.0739 & 0.0739\\
\hline
5 & 0.0460 & 0.1251 & 0.1251 & 0.0447 & 0.1197 & 0.1197 & 0.0377 & 0.0839 & 0.0839\\
\hline
6 & 0.0458 & 0.1400 & 0.1400 & 0.0462 & 0.1354 & 0.1354 & 0.0401 & 0.0919 & 0.0919\\
\hline
7 & 0.0454 & 0.1529 & 0.1529 & 0.0469 & 0.1489 & 0.1490 & 0.0418 & 0.0984 & 0.0984\\
\hline
8 & 0.0448 & 0.1641 & 0.1643 & 0.0471 & 0.1606 & 0.1609 & 0.0431 & 0.1039 & 0.1039\\
\hline
9 & 0.0442 & 0.1739 & 0.1745 & 0.0471 & 0.1709 & 0.1716 & 0.0440 & 0.1086 & 0.1086\\
\hline
10 & 0.0435 & 0.1827 & 0.1837 & 0.0469 & 0.1803 & 0.1812 & 0.0447 & 0.1128 & 0.1128\\
\hline
\end{tabular}
\end{table}

\begin{table}
\centering
\caption{ \label{tab_seol_err} Average and maximum errors for $D_{a,b}$ }
\vspace{.2cm}
\begin{tabular}{c|c|c|c|c}
\hline
 & \multicolumn{2}{|c|}{Seol \& Cheun} & \multicolumn{2}{|c}{This study}\\
\hline
 $n$ & ARE & $\mbox{MRE}_e$ & ARE & $\mbox{MRE}_e$\\
\hline
2 & 0.0200 & 0.0526 & 0.0200 & 0.0525\\
\hline
3 & 0.0239 & 0.0991 & 0.0239 & 0.0998\\
\hline
4 & 0.0257 & 0.1342 & 0.0257 & 0.1363\\
\hline
5 & 0.0268 & 0.1420 & 0.0268 & 0.1649\\
\hline
6 & 0.0273 & 0.1674 & 0.0273 & 0.1871\\
\hline
7 & 0.0276 & 0.1772 & 0.0276 & 0.1968\\
\hline
8 & 0.0277 & 0.1753 & 0.0277 & 0.2076\\
\hline
9 & 0.0277 & 0.1711 & 0.0277 & 0.2120\\
\hline
10 & 0.0276 &   0.1526 & 0.0276 & 0.2156\\
\hline
\end{tabular}
\end{table}

By examining Tables \ref{tab_avg_max_err} and \ref{tab_seol_err}
the following observations can be made regarding the maximum error:

\begin{itemize}
\renewcommand{\labelitemi}{$\triangleright$}
\item
$D_B$ is the most accurate approximation in all cases.
This is because this norm is designed to minimize the
maximum error and it has a more sophisticated weighting
scheme than the other two approximations, i.e.\ $D_\lambda$
and $D_{\mu ,\lambda}$, that are based on the same optimality criterion.
\item
As is also evident from Figure \ref{fig_max_err},
$D_{\mu ,\lambda}$ is slightly more accurate than
$D_\lambda$ especially for small values of $n$, in accordance with the
greater degrees of freedom it is afforded.

\item
$D_{a,b}$ is the least accurate approximation except for $n=2$.
This was expected since this norm is designed to minimize
the mean squared error rather than the maximum error.
\item
As $n$ is increased, the error increases in all approximations.
However, as can be seen from Figure \ref{fig_max_err},
the error grows faster in some approximations than others.
\end{itemize}

On the other hand, with respect to average error we can see that:

\begin{itemize}
\renewcommand{\labelitemi}{$\triangleright$}
\item
As expected, $D_{a,b}$ is the most accurate approximation.
\item
As $n$ is increased, the error increases consistently for the $D_B$ norm.
This is not the case for the $D_\lambda$ and $D_{\mu ,\lambda}$ norms.
This inconsistent average error behavior is not surprising given the fact
that these norms are designed to minimize the maximum error.
\item
Interestingly, $D_\lambda$ is more accurate than $D_{\mu,\lambda}$ for $n > 5$.
A possible explanation to this phenomenon is that both approximations are optimized
for the maximum error. Since the minimization of the maximum and average errors are
conflicting objectives, it is likely that $D_{\mu ,\lambda}$ sacrifices the average
error to obtain better (lower) maximum error. The same relationship holds between
$D_{a,b}$ and $D_B$.
\end{itemize}

\section{Sampling on the Unit Hypersphere}
\label{sec_sampling}

In this section, we demonstrate why a fixed number of samples from the unit hypersphere (i.e. the
approach advocated in \cite{Seol08}) can give biased estimates for the maximum error. The basic reason
behind this is the fact that a fixed number of samples fail to suffice as the dimension of the space
increases. The following provides a plausibility argument as to why this is the case.
To this end, we need to consider the notion of covering a sphere `sufficiently'.
We begin with some definitions.

A closed $n$-ball of radius $r$ with respect to the Euclidean norm,
denoted $B_2^n(r)$, is the set of points whose Euclidean norm is less than or equal to $r$.
That is,
\[
B_2^n(r) = \{ \bx \in \rn : D_2(\bx) \le r \}.
\]
Note that, in particular, the unit hypersphere $S_2^{n-1}$ of $\rn$ is the {\em boundary} of $B_2^n(1)$.

Given an $\varepsilon > 0$, we say that a set $\mathcal{C}$ of points on $S_2^{n-1}$ is
an $\varepsilon$-\emph{dense covering} of $S_2^{n-1}$
if for any $\bx$ in $\mathcal{C}$, there exists at least one $\hat{\bx}$ in $\mathcal{C}$
(different than $\bx$) such that $D_2(\bx-\hat{\bx}) < \varepsilon$.
%This is just formalizing the notion  of `closeness' of two points on the surface of the sphere.
Essentially, our main purpose here is to give a rough estimate of the number of points in $\mathcal{C}$,
where $\mathcal{C}$ is an $\varepsilon$-\emph{dense covering} of $S_2^{n-1}$.
We would then argue that, if $\varepsilon$ is sufficiently small then $\mathcal{C}$ is a fine-enough
representation of points on $S_2^{n-1}$. Therefore, we can restrict any computation that
needs to be performed on $S_2^{n-1}$ to the finite set $\mathcal{C}$.

The basic idea behind the proof is to approximate $S_2^{n-1}$ by $B_2^{n-1}(\varepsilon)$,
that is, approximate the unit hypersphere of $\rn$ by $(n-1)$-balls of radius $\varepsilon$.
This is the same principle as approximating a circle $(S_2^1)$ in $\rr^2$
by tiny line segments $(B_2^1(\varepsilon))$,
or the surface of a sphere $(S_2^2)$ in $\rr^3$ by tiny discs $(B_2^2(\varepsilon))$.
It is easy to see that, if we choose $\varepsilon$ small enough, then the approximation
is satisfactory for most practical purposes.

To proceed further, we need a lemma from elementary probability theory,
which is known as the \emph{coupon collector's problem} \cite{Mitzenmacher05}.
\begin{lem}
\label{lem_coupon}
Given a collection of $c$ distinct objects, the expected number of independent random trials needed to
sample each one of the $c$ objects is $\mathcal{O}(c \log c)$.
\end{lem}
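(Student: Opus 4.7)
The plan is to decompose the total number of trials into a sum of geometrically distributed waiting times, one for each ``new'' object collected, and then use linearity of expectation together with the harmonic number estimate.

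First I would let $T$ denote the random number of trials required to sample all $c$ objects, and write $T = T_1 + T_2 + \cdots + T_c$, where $T_i$ is the number of additional trials needed to obtain the $i$-th distinct object after $i-1$ distinct objects have already been collected. The key observation is that once $i-1$ distinct objects have been seen, each new independent trial produces a previously unseen object with probability
\begin{equation*}
p_i = \frac{c - (i-1)}{c},
\end{equation*}
independent of previous trials. Hence $T_i$ follows a geometric distribution with parameter $p_i$, so that $E[T_i] = 1/p_i = c/(c-i+1)$.

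Next, by linearity of expectation,
\begin{equation*}
E[T] = \sum_{i=1}^{c} E[T_i] = \sum_{i=1}^{c} \frac{c}{c-i+1} = c \sum_{k=1}^{c} \frac{1}{k} = c\, H_c,
\end{equation*}
where $H_c$ is the $c$-th harmonic number. Using the standard estimate $H_c = \ln c + \gamma + O(1/c)$ (with $\gamma$ the Euler--Mascheroni constant), or more crudely the elementary bound $H_c \le 1 + \ln c$, one obtains $E[T] = c\ln c + O(c)$, which is $\mathcal{O}(c \log c)$, as claimed.

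There is no real obstacle in this proof; the only subtle point is the independence argument needed to justify that $T_i$ is genuinely geometric with the stated parameter (formally, conditioning on the identities of the objects collected so far does not change $p_i$, only the indicator of whether a new trial is ``successful''). If a more careful presentation were desired, I would either invoke the strong Markov property for the sequence of ``new object'' events or argue directly from the memorylessness of independent identically distributed trials; either route is routine and does not affect the final $\mathcal{O}(c \log c)$ bound.
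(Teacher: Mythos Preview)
Your argument is correct and is exactly the standard coupon-collector computation: decompose $T=\sum_{i=1}^c T_i$ into geometric waiting times with success probabilities $p_i=(c-i+1)/c$, sum the expectations to get $cH_c$, and bound the harmonic number. There is nothing to fix.

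As for comparison with the paper: the paper does not actually prove this lemma. It states it as a known result and cites Mitzenmacher and Upfal's textbook \cite{Mitzenmacher05} for the proof. The proof you have written is precisely the one found in that reference (and in essentially every probability text that treats the coupon collector's problem), so your approach is the canonical one and would serve perfectly well as the omitted argument.
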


We can now prove the following result.
\begin{thm}
\label{thm_1}
The expected number of uniformly distributed samples needed to
generate an $\varepsilon$-dense covering of $S_2^{n-1}$ is
$\mathcal{O}(N \log N)$ where $N = \frac{n}{\varepsilon^{n-1}}$
\end{thm}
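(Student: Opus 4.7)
The plan is to reduce the problem to the coupon collector setup by partitioning $S_2^{n-1}$ into $c$ patches, each of diameter at most $\varepsilon$, treating each patch as a ``coupon,'' and showing that $c=\mathcal{O}(N)$ so that the bound of Lemma~\ref{lem_coupon} becomes $\mathcal{O}(N\log N)$.

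First, I would estimate $c$ by the heuristic described just before the theorem statement. The unit hypersphere $S_2^{n-1}\subset\rn$ has surface area $A_{n-1}=2\pi^{n/2}/\Gamma(n/2)$, while an $(n-1)$-dimensional ball of radius $\varepsilon$ has volume $V_{n-1}(\varepsilon)=\pi^{(n-1)/2}\varepsilon^{n-1}/\Gamma((n+1)/2)$. Since the curvature of $S_2^{n-1}$ is negligible on the scale of $\varepsilon$, approximating each small patch of $S_2^{n-1}$ by a flat $(n-1)$-disc of radius $\varepsilon$ (as the authors propose) yields
\[
c \;\le\; \Bigl\lceil \frac{A_{n-1}}{V_{n-1}(\varepsilon)} \Bigr\rceil
\;=\; \Bigl\lceil \frac{2\sqrt{\pi}\,\Gamma((n+1)/2)}{\Gamma(n/2)}\cdot\frac{1}{\varepsilon^{n-1}} \Bigr\rceil.
\]
An application of Stirling's formula gives $\Gamma((n+1)/2)/\Gamma(n/2)=\Theta(\sqrt{n})$, and hence $c=\mathcal{O}(\sqrt{n}/\varepsilon^{n-1})\subseteq \mathcal{O}(n/\varepsilon^{n-1})=\mathcal{O}(N)$, with comfortable slack in the statement.

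Second, I would arrange the partition so that each patch carries an equal share $1/c$ of the uniform surface measure on $S_2^{n-1}$. Under uniform sampling, each patch is then hit with equal probability, so the patches play the role of the $c$ distinct objects in Lemma~\ref{lem_coupon}. The lemma yields that the expected number of independent samples needed until every patch has been hit at least once is $\mathcal{O}(c\log c)=\mathcal{O}(N\log N)$. Once this event occurs, every $\bx\in S_2^{n-1}$ shares a patch with at least one sample at Euclidean distance less than $\varepsilon$, so in particular every sample in $\mathcal{C}$ has another sample within $\varepsilon$ (which can be guaranteed by waiting until each patch contains at least two samples, still within $\mathcal{O}(N\log N)$ trials), matching the paper's definition of an $\varepsilon$-dense covering.

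The main obstacle is the flat-disc approximation: strictly, the patches are spherical caps whose areas differ from $V_{n-1}(\varepsilon)$ by a curvature-dependent factor, and exhibiting a partition of $S_2^{n-1}$ into exactly equal-measure patches of diameter at most $\varepsilon$ takes some care. However, any constant-factor error in the per-patch area estimate is absorbed into the $\mathcal{O}(\cdot)$, and the looseness between $\sqrt{n}/\varepsilon^{n-1}$ and the stated $N=n/\varepsilon^{n-1}$ leaves room to spare; since the authors advertise this as a ``plausibility argument,'' the heuristic-level bookkeeping suffices for the desired asymptotic conclusion.
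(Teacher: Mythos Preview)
Your proposal is correct and follows essentially the same argument as the paper: divide the surface area $A_{n-1}(1)$ of $S_2^{n-1}$ by the volume $V_{n-1}(\varepsilon)$ of an $(n-1)$-ball to estimate the number of patches, then invoke the coupon-collector lemma. Your Stirling computation is in fact slightly sharper---you correctly obtain $c=\Theta(\sqrt{n}/\varepsilon^{n-1})$ whereas the paper bounds the gamma ratio crudely by a constant to get $\mathcal{O}(n/\varepsilon^{n-1})$---and you make explicit the equal-measure requirement and the curvature caveat that the paper defers to its post-theorem discussion.
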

\begin{proof}
Let $\varepsilon > 0$ be given.
We will first count the number of identical copies of $B_2^{n-1}(\varepsilon)$-balls
that are needed to approximate $S_2^{n-1}$ in the sense described above.
By elementary calculus one can compute the volume of an $B_2^{n}(r)$ to be
\[
V_n(r) := \frac{\pi^{\frac{n}{2}}}{\Gamma\left({\frac{n}{2} + 1}\right)}r^n := C_n r^n,
\]
where $\Gamma$ is the gamma function.
The surface area of this ball is equal to the derivative with respect to $r$ of its volume:
\[
A_{n-1}(r) = \frac{d}{dr}V_n(r) = nC_nr^{n-1}.
\]
Note that $A_{n-1}(1)$ is equal to the surface area of $S_2^{n-1}$.
The approximate number of $B_2^{n-1}(\varepsilon)$-balls
needed to cover the surface of $S_2^{n-1}$ is the ratio of the surface area of $S_2^{n-1}$
to the volume of $B_2^{n-1}(\varepsilon)$, i.e.,
\begin{alignat*}{1}
\frac{A_{n-1}(1)}{V_{n-1}(\varepsilon)}
&= \frac{n\,C_n}{C_{n-1} \varepsilon^{n-1}}
= \frac{n\,\pi^{\frac{n}{2}}}{\pi^{\frac{n-1}{2}}}
\frac{\Gamma\left({\frac{n-1}{2}+1}\right)}{\Gamma\left({\frac{n}{2}+1}\right)}
\frac{1}{\varepsilon^{n-1}} \\
&\approx \frac{n\,\pi^{\frac12}}{\varepsilon^{n-1}}
= \mathcal{O}\left(\frac{n}{\varepsilon^{n-1}}\right).
\end{alignat*}
The result now follows once we apply Lemma \ref{lem_coupon} with $c = \frac{n}{\varepsilon^{n-1}}$.
\end{proof}

In the light of the following result, we see that the actual number of
samples required does not deviate significantly from the value provided
by Theorem \ref{thm_1}.
\begin{thm}
\label{thm_2}
Let $X$ be the number of samples observed before obtaining one in each region.
Then, for any constant $s > 0$ we have
\begin{equation*}
P(X > c \ln c + sc) < e^{-s}.
\end{equation*}
\end{thm}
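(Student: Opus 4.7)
The plan is to prove this tail bound by the classical union-bound argument for the coupon collector problem. I would identify $c$ with the number of regions covering $S_2^{n-1}$, so that each uniformly distributed sample lands in any fixed region with probability $1/c$, independently across trials. Setting $t := c \ln c + sc$, it suffices to show that the probability of still missing at least one region after $t$ trials is strictly bounded by $e^{-s}$.

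First, I would fix a region index $i \in \{1, \ldots, c\}$ and let $E_i$ denote the event that region $i$ receives no sample during the first $t$ trials. Since the trials are independent and each one misses region $i$ with probability $1 - 1/c$, we have $P(E_i) = (1 - 1/c)^t$. Applying the standard inequality $1 - x \leq e^{-x}$ then gives
\[
P(E_i) \;\leq\; e^{-t/c} \;=\; e^{-\ln c - s} \;=\; \frac{e^{-s}}{c}.
\]
Next, I would observe that $\{X > t\} = \bigcup_{i=1}^c E_i$, because $X$ exceeds $t$ precisely when at least one region remains empty after $t$ trials. A union bound then yields
\[
P(X > t) \;\leq\; \sum_{i=1}^c P(E_i) \;\leq\; c \cdot \frac{e^{-s}}{c} \;=\; e^{-s},
\]
which is the stated inequality.

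The argument is entirely elementary and I do not anticipate any serious obstacle. The only subtlety worth noting is that the theorem claims a strict inequality, which I would justify by remarking that $1 - 1/c < e^{-1/c}$ strictly for every integer $c \geq 2$, so the exponential bound on $P(E_i)$ is strict and survives summation. The delicate point worth emphasizing is the exact calibration of $t = c \ln c + sc$: the $c \ln c$ term is precisely what is needed so that $e^{-t/c}$ contributes a factor $1/c$ that cancels the $c$ arising from the union bound, while the extra $sc$ supplies the final $e^{-s}$ decay.
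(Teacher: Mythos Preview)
Your proposal is correct and matches the paper's own proof almost verbatim: bound the probability that a fixed region is missed after $t=c\ln c+sc$ trials by $(1-1/c)^t<e^{-t/c}=e^{-s}/c$, then apply a union bound over the $c$ regions. Your added remark that $1-1/c<e^{-1/c}$ for $c\ge 2$ to secure the strict inequality is a nice touch the paper leaves implicit.
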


\begin{proof}
The probability of not obtaining the $i$th region after $c \ln c + sc$ steps is
\begin{equation*}
\left(1 - \frac{1}{c}\right)^{c(\ln c + s)} < e^{-(\ln c + s)} = \frac{1}{e^s c}.
\end{equation*}
By a union bound, the probability that a region has not been obtained after
$c \ln c + s c$ steps is only $e^{-s}$.
\end{proof}
\par
Note that one can use a Chernoff bound to obtain an even tighter bound in Theorem \ref{thm_2} since
\[
\lim_{c\rightarrow\infty}P(X > c \ln c + sc) = 1 - e^{-e^{-s}}.
\]
See \cite{Mitzenmacher05} for details.
\par
We should note that in order to apply Lemma \ref{lem_coupon} the patches
used to cover $S_2^{n-1}$ should be disjoint which is clearly not the case
since we have used $B_2^{n-1}(\varepsilon)$ for this purpose.
This leads to an overestimate of the
samples needed to obtain a dense covering, and thus
the argument presented in this section is only a rough estimate.
However, as empirically demonstrated in the previous section,
a fixed number of samples as in \cite{Seol08} is definitely
not sufficient either.
To come up with a tight estimate of the number of sample points needed,
one has to express $S_2^{n-1}$ as a disjoint union of small patches.
The delicacy lies in the requirement that this has to be achieved
through a constructive process in a way that the surface
area of each patch can be explicitly computed as a function of the dimension $n$,
and a characteristic measure $\varepsilon$.
To the best of the authors' knowledge there is no systematic method
in the literature to achieve this.

\section{Conclusions}
\label{sec_conc}
In this paper, we investigated the theoretical and practical aspects of several Euclidean
norm approximations in the literature and showed that these are in fact special cases
of the weighted city-block norm. We evaluated the average and maximum errors of these
norms using numerical simulations. Finally, we demonstrated that the maximum errors
given in a recent study \cite{Seol08} are significantly optimistic.
\par
The implementations of the approximate norms described in this paper will be made publicly
available at \url{http://www.lsus.edu/faculty/~ecelebi/research.htm}.

\section{Acknowledgments}
\label{sec_ack}
This work was supported by a grant from the Louisiana Board of Regents (LEQSF2008-11-RD-A-12).
The authors are grateful to Changkyu Seol for clarifying various points about his paper.

\bibliographystyle{elsarticle-num}

\end{document}